\newcommand{\de}{\partial}
\newcommand{\ddbar}{\sqrt{-1} \partial \overline{\partial}}
\newcommand{\ov}[1]{\overline{#1}}
\newcommand{\ti}[1]{\tilde{#1}}
\newcommand{\vp}{\varphi}
\renewcommand{\leq}{\leqslant}
\renewcommand{\geq}{\geqslant}
\renewcommand{\ge}{\geqslant}
\newcommand{\be}{\begin{equation}}
\newcommand{\ee}{\end{equation}}
\begin{document}
\newtheorem{claim}{Claim}
\newtheorem{theorem}{Theorem}[section]
\newtheorem{lemma}[theorem]{Lemma}
\newtheorem{corollary}[theorem]{Corollary}
\newtheorem{proposition}[theorem]{Proposition}
\newtheorem{question}{question}[section]
\theoremstyle{definition}
\newtheorem{remark}[theorem]{Remark}

\numberwithin{equation}{section}

\title[Regularity of envelopes]{Regularity of envelopes in K\"ahler classes}
\author[V. Tosatti]{Valentino Tosatti}
\address{Department of Mathematics, Northwestern University, 2033 Sheridan Road, Evanston, IL 60208}
\email{tosatti@math.northwestern.edu}

\begin{abstract}We establish the $C^{1,1}$ regularity of quasi-psh envelopes in a K\"ahler class, confirming a conjecture of Berman.
\end{abstract}

\maketitle

\section{Introduction}
Let $(X^n,\omega)$ be a compact K\"ahler manifold, and $\theta=\omega+\ddbar v$ a closed real $(1,1)$-form cohomologous to $\omega$, where $v\in C^\infty(X,\mathbb{R})$. The envelope (or extremal function) $u_\theta$ is defined by
\[\begin{split}u_\theta(x)&=\sup\{u(x)\ |\ u\in PSH(X,\theta), u\leq 0\}\\
&=-v+\sup\{u(x)\ |\ u\in PSH(X,\omega), u\leq v\},\end{split}\]
is a $\theta$-psh function with minimal singularities in the class $[\omega]$, and has received much attention recently (see for example \cite{BB,BBGZ,WN} and references therein). By Berman-Demailly \cite{BD}, we know that the complex Hessian (or equivalently the Laplacian) of $u_\theta$ belongs to $L^\infty(X)$, and so in particular $u_\theta$ is $C^{1,\alpha}(X)$ for all $0<\alpha<1$. A direct PDE proof was given by Berman in \cite{Be2}.

Here we establish the optimal regularity result for the envelope, which was previously only known when $[\omega]\in H^2(X,\mathbb{Q})$ by \cite{Be3} (see also \cite{Be}). This resolves affirmatively a conjecture of Berman \cite[Conjecture 1.10]{Be3}:
\begin{theorem}\label{main}
The envelope $u_\theta$ is in $C^{1,1}(X)$.
\end{theorem}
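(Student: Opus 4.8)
The plan is to derive Theorem~\ref{main} from uniform \emph{a priori} estimates along a family of nondegenerate complex Monge--Amp\`ere equations approximating the envelope, following the spirit of Berman's PDE proof together with the method of Chu--Tosatti--Weinkove for $C^{1,1}$ estimates. Since $u_\theta=-v+\psi$ with $\psi=\sup\{u\in\PSH(X,\omega):u\le v\}$ and $v$ is smooth, it suffices to show $\psi\in C^{1,1}(X)$. For $\beta>0$ let $\psi_\beta\in\PSH(X,\omega)\cap C^\infty(X)$ be the unique solution of
\[
(\omega+\ddbar\psi_\beta)^n=e^{\beta(\psi_\beta-v)}\omega^n ,
\]
provided by the Aubin--Yau theorem, the right-hand side being increasing in $\psi_\beta$. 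Evaluating at extrema of $\psi_\beta$ gives $\inf_Xv\le\psi_\beta\le\sup_Xv$; comparing the equation with $(\omega+\ddbar v)^n$ at a maximum of $\psi_\beta-v$ gives $\psi_\beta\le v+C\beta^{-1}$; hence $\psi_\beta-C\beta^{-1}$ is a competitor in the envelope, so $\psi_\beta\le\psi+C\beta^{-1}$, and a standard comparison argument (see \cite{Be2}) supplies the matching lower bound, so that $\psi_\beta\to\psi$ uniformly on $X$. The usual Yau second-order estimate applied to $\log\tr{\omega}{(\omega+\ddbar\psi_\beta)}-A\psi_\beta$ then yields $0\le n+\Delta_\omega\psi_\beta\le C$ \emph{independently of $\beta$} --- one uses $\beta(\psi_\beta-v)\le C$ and the fact that the term $\beta\bigl(1-(n+\Delta_\omega v)/\tr{\omega}{(\omega+\ddbar\psi_\beta)}\bigr)$ obtained by differentiating $\log e^{\beta(\psi_\beta-v)}$ has the favourable sign where the trace is large --- and hence, with the $C^0$ bound, $\|\psi_\beta\|_{C^{1,\alpha}(X)}\le C$ for every $\alpha<1$, uniformly in $\beta$.

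It therefore suffices to prove a bound $\|\nabla^2\psi_\beta\|_{C^0(X)}\le C$ on the \emph{full real} Hessian with $C$ independent of $\beta$: granting this, $\psi$ inherits (as a weak-$*$ limit) a distributional Hessian in $L^\infty$, whence $\psi\in C^{1,1}(X)$ and $u_\theta=-v+\psi\in C^{1,1}(X)$. Note that the estimates above control only the block $\nabla_i\nabla_{\ov j}\psi_\beta$ of the real Hessian (the complex Hessian), while the $C^{1,1}$ bound needs in addition the block $\nabla_i\nabla_j\psi_\beta$, i.e.\ $\de_i\de_j\psi_\beta$ up to $C^1$-controlled terms; and since $\omega+\ddbar\psi_\beta\ge0$, an \emph{upper} bound on the largest eigenvalue $\lambda_1(x)$ of $\nabla^2\psi_\beta(x)$, measured with the K\"ahler metric $g$ of $\omega$, already forces the matching lower bound on the smallest eigenvalue. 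So everything reduces to a uniform bound $\sup_X\lambda_1\le C$.

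To obtain it I would run the maximum principle on
\[
Q=\log\lambda_1+\varphi\bigl(|\nabla\psi_\beta|_g^2\bigr)-A\psi_\beta ,
\]
where $\nabla$ is the Levi-Civita connection of $g$, $\varphi$ is a suitably chosen function (the gradient term supplies an extra good third-order term, as in Chu--Tosatti--Weinkove) and $A$ is a large constant. At a maximum point $x_0$ of $Q$ the non-smoothness of $\lambda_1$ is handled in the standard way, by perturbing $\nabla^2\psi_\beta$ to a tensor with simple top eigenvalue at $x_0$; then, writing $L=\sum_{i,j}g_{\psi_\beta}^{i\ov j}\de_i\de_{\ov j}$ for the linearised Monge--Amp\`ere operator (with $g_{\psi_\beta}$ the metric of $\omega+\ddbar\psi_\beta$), one computes $LQ$ at $x_0$ and uses $LQ\le0$ there. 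Expanding $L\log\lambda_1$ through the first and second variation of eigenvalues and differentiating the equation twice along the top eigenvector produces $L(\log\det g_{\psi_\beta})=\beta\,L(\psi_\beta-v)$, with leading part $\beta\bigl(n-\tr{g_{\psi_\beta}}{\omega}\bigr)-\beta\,\tr{g_{\psi_\beta}}{\ddbar v}$; just as in the Laplacian estimate this $\beta$-contribution has the right sign (or is bounded above, by $\beta(\psi_\beta-v)\le C$), and the curvature terms of $g$ are absorbed using $-AL\psi_\beta=-An+A\,\tr{g_{\psi_\beta}}{\omega}$ with $A$ large.

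The crux --- and the step I expect to be the main obstacle --- is the third-order terms. After invoking the concavity of $\log\det$ along the equation to dispose of the diagonal third derivatives, one is left with terms cubic in $\nabla^3\psi_\beta$, weighted by $g_{\psi_\beta}^{-1}$, of no definite sign; these must be absorbed using the good third-order term produced by $\varphi$ together with the terms of $L\log\lambda_1$ carrying the eigenvalue gaps $(\lambda_1-\lambda_k)^{-1}$. Here it is essential that $\omega$ be K\"ahler --- so that $\nabla$ is compatible with the complex structure and mixed third covariant derivatives of $\psi_\beta$ commute up to curvature --- which makes several of the \emph{a priori} dangerous terms vanish or recombine into complete squares; the remaining Cauchy--Schwarz bookkeeping is as in Chu--Tosatti--Weinkove, the one novelty being that every constant must be kept independent of $\beta$, which is where the uniform bounds $\|\psi_\beta\|_{C^0}+|\ddbar\psi_\beta|_\omega\le C$ and $\beta(\psi_\beta-v)\le C$ are used repeatedly (for instance to bound $\sum_i g_{\psi_\beta}^{i\ov i}$ and $(\det g_{\psi_\beta})^{-1}$ from above). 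With $\varphi$ and $A$ chosen appropriately, $LQ\le0$ at $x_0$ then forces $\lambda_1(x_0)\le C$, and since $Q$ attains its maximum at $x_0$ we get $\sup_X\lambda_1\le C$ uniformly in $\beta$, which by the first two paragraphs completes the proof.
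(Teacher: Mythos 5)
Your overall strategy coincides with the paper's: approximate by Berman's family $(\omega+\ddbar\psi_\beta)^n=e^{\beta(\psi_\beta-v)}\omega^n$, use the uniform $C^0$/Laplacian bounds and convergence to the envelope, reduce the full real Hessian bound to a uniform upper bound on $\lambda_1(\nabla^2\psi_\beta)$ via the trace, and run the Chu--Tosatti--Weinkove maximum principle on $\log\lambda_1+h(|\nabla\psi_\beta|_g^2)-A\psi_\beta$ with the usual perturbation of $\lambda_1$. However, the place where you declare the crux to lie (the third-order terms) is in fact handled verbatim as in \cite{CTW2} once the analogue of \eqref{e4} is established, while the genuinely new feature of this problem --- the terms proportional to $\beta$ coming from the right-hand side --- is exactly where your sketch does not close as written.

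Two concrete points. First, your displayed ``leading part'' $\beta\bigl(n-\tr{g_{\psi_\beta}}{\omega}\bigr)-\beta\,\tr{g_{\psi_\beta}}{\ddbar v}$ is what results from applying the linearized operator $L$ to the equation; that is not the relevant computation for $\Delta_{\ti{g}}\lambda_1$, and such terms, being of size $\beta\sum_i\ti{g}^{i\ov{i}}$, could not be absorbed with $A$ independent of $\beta$. The correct operation is to apply $V_1V_1$ to $\log\det g_{\psi_\beta}=\log\det g+\beta(\psi_\beta-v)$, which at the maximum point produces $\beta V_1V_1(\psi_\beta)-\beta V_1V_1(v)=\beta\lambda_1-O(\beta)\geq \tfrac{\beta}{2}\lambda_1$ once $\lambda_1$ is large; note that the bound $\beta(\psi_\beta-v)\leq C$ you invoke controls no second derivatives and plays no role here. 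Second, and more importantly, you never address the $\beta$-term created by differentiating the equation \emph{once}, which enters through $\Delta_{\ti{g}}(|\partial\psi_\beta|^2_g)$ as $2\beta\,\mathrm{Re}\sum_k(\psi_\beta)_k(\psi_\beta-v)_{\ov{k}}$: the piece $-2\beta\,\mathrm{Re}\sum_k(\psi_\beta)_k v_{\ov{k}}$ is of order $\beta$ with no factor of $\lambda_1$, so it cannot be absorbed by a $\beta$-independent $A$ and must instead be dominated by the $\tfrac{\beta}{2}$ gained above. This is precisely why the paper modifies the CTW auxiliary function, taking $h(s)=-\tfrac{\lambda}{2}\log(1+\sup|\partial\psi_\beta|^2_g-s)$ with $\lambda=(1+2\sup_X|\partial v|^2_g)^{-1}$ instead of $\lambda=1$, so that $\beta h'|\partial v|^2_g\leq\beta/4<\beta/2$ while keeping $h''\geq 2(h')^2$. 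Your ``suitably chosen'' gradient function leaves exactly this choice unspecified, and with the original CTW choice the argument fails whenever $\sup_X|\partial v|^2_g$ is large; filling in this point brings your proposal into line with the paper's proof.
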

This is in general optimal, see e.g. \cite[Example 5.2]{Be3} for examples on toric manifolds.

In fact, combining our result with the arguments in \cite[Proof of Theorem 2.5]{DR}, we obtain the same $C^{1,1}$ regularity result for the ``rooftop envelopes''
$$P(v_1,\dots,v_k)(x)=\sup\{u(x)\ |\ u\in PSH(X,\omega), u\leq \min_{j=1,\dots,k} v_j\},$$
where the $v_j$'s are $C^{1,1}$ functions, see Theorem \ref{main2} below.

Also, using Theorem \ref{main} together with the arguments in \cite[Theorem 3.4]{Be3}, we obtain a shlightly shorter proof of the identity
\begin{equation}\label{form}
(\theta+\ddbar u_\theta)^n=\chi_{\{u_\theta=0\}} \theta^n,
\end{equation}
which clearly implies
\begin{equation}\label{form2}
\int_{\{u_\theta=0\}}\theta^n=\int_X\omega^n,
\end{equation}
and which was proved (in more generality) in \cite[Corollary 2.5]{BD}. Indeed it is classical that the Monge-Amp\`ere operator $(\theta+\ddbar u_\theta)^n$ vanishes outside the contact set $\{u_\theta=0\}$ (see e.g. \cite[Proposition 3.1]{Be3} or \cite[Proposition 2.10]{BB}), and by Theorem \ref{main} we know that $\nabla_i u_\theta$ is Lipschitz (for any $1\leq i\leq n$, working in a local coordinate chart) and so $\nabla \nabla_i u_\theta=0$ a.e. on the set $\{\nabla_i u_\theta=0\}$ (see e.g. \cite[Theorem 3.2.6]{AT}), which contains the contact set. Therefore a.e. on the contact set we have $\nabla^2u_\theta=0$ and so $\theta+\ddbar u_\theta=\theta$, which proves \eqref{form}.

The proof of the Theorem \ref{main}, which is given in section \ref{sectmain}, is obtained by using Berman's result \cite{Be2} that the envelope $u_\theta$ is in fact the limit of solutions of a $1$-parameter family of complex Monge-Amp\`ere equations, together with the technique recently introduced by Chu, Weinkove and the author \cite{CTW,CTW2} to obtain uniform $C^{1,1}$ estimates for such equations.
A generalization of this result to ``rooftop envelopes'' (in the sense of \cite{DR}) is proved in section \ref{sectenv}.\\

After the first version of this paper was posted on the arXiv, we were informed that J. Chu and B. Zhou independently proved Theorem \ref{main} in \cite{CZ}.\\

\noindent
{\bf Acknowledgments. }The author is grateful to M.P\u{a}un, D.Witt Nystr\"om, J.Xiao for useful discussions, to J.Chu and B.Weinkove for our collaboration, and to the referee for useful comments.  These results were obtained during the author's visit at the Center for Mathematical Sciences and Applications at Harvard University, which he would like to thank for the hospitality. The author was partially supported by NSF grant DMS-1610278.

\section{$C^{1,1}$ regularity of envelopes}\label{sectmain}
In this section we give the proof of Theorem \ref{main}. Following the approach of \cite{Be2}, we consider the family of complex Monge-Amp\`ere equations
\begin{equation}\label{ma2}
(\theta+\ddbar u_\beta)^n=e^{\beta u_\beta}\omega^n,
\end{equation}
where $\beta\in\mathbb{R}_{\geq 0}$, the function $u_\beta$ is smooth and $\theta+\ddbar u_\beta$ is a K\"ahler metric on $X$. This is solvable thanks to the work of Aubin \cite{A} and Yau \cite{Ya}. Recall also that we write $\theta=\omega+\ddbar v$ for a smooth function $v$.

Berman shows in \cite{Be2} that
\begin{equation}\label{berm}
|u_\beta|\leq C, \quad |\Delta_g u_\beta|\leq C, \quad |u_\beta-u_\theta|\leq C\frac{\log\beta}{\beta},
\end{equation}
for a uniform constant $C$ independent of $\beta$ (and which depends only on the $C^{1,1}$ norm of $v$), from which it follows that $u_\beta$ converges to $u_\theta$ in $C^{1,\alpha}(X)$ for any $0<\alpha<1$, as $\beta\to\infty$.

Our main result is that for all $\beta\in\mathbb{R}_{\geq 0}$ we have
\begin{equation}\label{goal}
|\nabla^2 u_\beta|_g\leq C,
\end{equation}
for a uniform $C$, which immediately implies Theorem \ref{main}. As will be apparent from the proof, the constant $C$ depends only on the $C^{1,1}$ norm of $v$.
Let $\vp=u_\beta+v$ and rewrite \eqref{ma2} as
\begin{equation}\label{ma}
(\omega+\ddbar \vp)^n=e^{\beta (\vp-v)}\omega^n,
\end{equation}
where $\ti{\omega}:=\omega+\ddbar \vp$ is a K\"ahler metric,
and the idea is to follow very closely the method introduced by Chu, Weinkove and the author in \cite{CTW,CTW2}. We thus let $\lambda_1(\nabla^2\vp)$ be the largest eigenvalue of $\nabla^2\vp$ with respect to $g$, and the goal is to prove that $\lambda_1(\nabla^2\vp)\leq C$ for a uniform constant $C$. Indeed, once we prove this, since the trace of $\nabla^2\vp$ is $\Delta_g\vp$ which is bounded below by $-n$, we will conclude that
$$|\nabla^2 \vp|_g\leq C,$$
which implies \eqref{goal}. To this end, we apply the maximum principle to
$$Q = \log \lambda_1( \nabla^2 \varphi) + h(| \partial \varphi |^2_g) -A \varphi,$$
(defined on the set where $\lambda_1(\nabla^2\vp)>0$, which we may assume is nonempty) where $A>0$ is a uniform constant to be determined and
\begin{equation} \label{defh}
h(s) = - \frac{\lambda}{2} \log (1+ \sup_M |\partial \varphi|^2_g  - s),
\end{equation}
where $\lambda=(1+2\sup_X |\de v|^2_g)^{-1}\leq 1$, is a small uniform constant. The only difference between this quantity and the corresponding one in \cite{CTW2} is that there we just took $\lambda=1$.
We have
\begin{equation} \label{proh}
\frac{\lambda}{2}\geq h' \geq \frac{\lambda}{2+2\sup_M |\partial \varphi|^2_g}>0, \quad \textrm{and } h'' = \frac{2}{\lambda} (h')^2\geq 2(h')^2,
\end{equation}
where we are evaluating $h$ and its derivatives at $|\de\vp|^2_g$. The bounds \eqref{berm} show that the the last two terms in $Q$ are uniformly bounded.

We work at a point $x_0$ where the maximum is achieved, and as in \cite{CTW2} we choose local normal coordinates for $g$ near $x_0$, so that $(\ti{g}_{i\ov{j}})(x_0)$ is diagonal, as well as constant vector fields $\{V_\alpha\}$ near $x_0$ which at that point form an orthonormal basis of eigenvectors of $\nabla^2\vp$, with $\nabla^2\vp(V_1,V_1)(x_0)=\lambda_1$.

We also apply the same perturbation argument as in \cite{CTW2}, so that $Q$ gets replaced by the local quantity $\hat{Q}$ defined near $x_0$ as in \cite{CTW2} by
$$\hat{Q} = \log \lambda_1( \Phi) + h(| \partial \varphi |^2_g) -A \varphi,$$
where $\Phi$ is the endomorphism of $TX$ given by
$$\Phi^\mu_\nu=g^{\mu\gamma}(\nabla^2_{\gamma\nu}\vp-\delta_{\gamma\nu}+V_1^\gamma V_1^\nu),$$
where $(V_1^\nu)$ are the components of $V_1$. The largest eigenvalue $\lambda_1( \Phi)$ now varies smoothly near $x_0$ and $\hat{Q}$ achieves a local maximum at that point.
Writing $\lambda_\alpha=\lambda_\alpha(\Phi)$, the goal is to show that $\lambda_1(x_0)\leq C$, for a uniform constant $C$.
We claim that at $x_0$ we have
\begin{equation} \label{e4}
\begin{split}
0 \ge \Delta_{\ti{g}} \hat{Q}
\ge {} & 2 \sum_{\alpha >1}  \frac{\tilde{g}^{i\ov{i}} |\partial_i (\varphi_{V_{\alpha} V_1})|^2}{\lambda_1(\lambda_1-\lambda_{\alpha})} + \frac{\tilde{g}^{p\ov{p}} \tilde{g}^{q\ov{q}} | V_1(\tilde{g}_{p\ov{q}})|^2}{\lambda_1} - \frac{\tilde{g}^{i\ov{i}} | \partial_i (\varphi_{V_1 V_1})|^2}{\lambda_1^2} \\
{} & + h' \sum_k \tilde{g}^{i\ov{i}} (| \varphi_{ik}|^2 + |\varphi_{i\ov{k}}|^2) +\beta h' |\de\vp|^2_g+ h'' \tilde{g}^{i\ov{i}} |\partial_i | \partial \varphi|^2_g|^2 \\ {} &+ (A-C) \sum_i \tilde{g}^{i\ov{i}}- An+\frac{\beta}{4}\\
{} &\geq 2 \sum_{\alpha >1}  \frac{\tilde{g}^{i\ov{i}} |\partial_i (\varphi_{V_{\alpha} V_1})|^2}{\lambda_1(\lambda_1-\lambda_{\alpha})} + \frac{\tilde{g}^{p\ov{p}} \tilde{g}^{q\ov{q}} | V_1(\tilde{g}_{p\ov{q}})|^2}{\lambda_1} - \frac{\tilde{g}^{i\ov{i}} | \partial_i (\varphi_{V_1 V_1})|^2}{\lambda_1^2} \\
{} & + h' \sum_k \tilde{g}^{i\ov{i}} (| \varphi_{ik}|^2 + |\varphi_{i\ov{k}}|^2) + h'' \tilde{g}^{i\ov{i}} |\partial_i | \partial \varphi|^2_g|^2 \\ {} &+ (A-C) \sum_i \tilde{g}^{i\ov{i}}
 - An.
\end{split}
\end{equation}
Indeed, as in \cite[(2.7)]{CTW2} we have
\begin{equation} \label{e3}
\begin{split} \Delta_{\ti{g}}\hat{Q}  = {} & \frac{\Delta_{\ti{g}}(\lambda_1)}{\lambda_1} - \frac{\tilde{g}^{i\ov{i}} | \partial_i (\varphi_{V_1 V_1})|^2}{\lambda_1^2} + h'\Delta_{\ti{g}} (| \partial \varphi|^2_g) + h'' \tilde{g}^{i\ov{i}} |\partial_i | \partial \varphi|^2_g|^2 \\ & + A \sum_i \tilde{g}^{i\ov{i}}- An,
\end{split}
\end{equation}
and as in \cite[(2.8)]{CTW2}
\begin{equation} \label{e1}
\begin{split}
\Delta_{\ti{g}} (\lambda_1)
\ge {} & 2 \sum_{\alpha >1} \tilde{g}^{i\ov{i}} \frac{ |\partial_i (\varphi_{V_{\alpha} V_1})|^2}{\lambda_1-\lambda_{\alpha}} + \tilde{g}^{i\ov{i}} V_1 V_1 (\tilde{g}_{i\ov{i}})
 - C \lambda_1 \sum_i \tilde{g}^{i\ov{i}}.
\end{split}
\end{equation}
The Monge-Amp\`ere equation \eqref{ma} in local coordinates reads
\begin{equation}\label{logeq}
\log\det\ti{g}=\log\det g+\beta\vp -\beta v,
\end{equation}
and so applying $V_1V_1$ to this and evaluating at $x_0$ we obtain
\begin{equation} \label{e0}
\begin{split}
\tilde{g}^{i\ov{i}} V_1 V_1 (\tilde{g}_{i\ov{i}}) &= \tilde{g}^{p\ov{p}} \tilde{g}^{q\ov{q}} | V_1(\tilde{g}_{p\ov{q}})|^2 + V_1V_1(\log\det g)+\beta V_1 V_1(\vp)
-\beta V_1 V_1(v)\\
&= \tilde{g}^{p\ov{p}} \tilde{g}^{q\ov{q}} | V_1(\tilde{g}_{p\ov{q}})|^2 + V_1V_1(\log\det g)+\beta\lambda_1-\beta V_1V_1(v)\\
&\geq \tilde{g}^{p\ov{p}} \tilde{g}^{q\ov{q}} | V_1(\tilde{g}_{p\ov{q}})|^2 + V_1V_1(\log\det g)+\beta(\lambda_1-C)\\
&\geq \tilde{g}^{p\ov{p}} \tilde{g}^{q\ov{q}} | V_1(\tilde{g}_{p\ov{q}})|^2 + V_1V_1(\log\det g)+\frac{\beta}{2}\lambda_1,\\
\end{split}
\end{equation}
since we may assume that at $x_0$ the largest eigenvalue $\lambda_1$ is large. This gives
\begin{equation} \label{e2}
\begin{split}
\Delta_{\ti{g}} (\lambda_1)  \ge {} & 2 \sum_{\alpha>1} \tilde{g}^{i\ov{i}} \frac{ |\partial _i (\varphi_{V_{\alpha} V_1})|^2}{\lambda_1 - \lambda_{\alpha}} + \tilde{g}^{p\ov{p}} \tilde{g}^{q\ov{q}} |V_1(\tilde{g}_{p\ov{q}})|^2 - C \lambda_1 \sum_i \tilde{g}^{i\ov{i}}+\frac{\beta}{2}\lambda_1.
\end{split}
\end{equation}

Next,  at $x_{0}$,
\begin{equation}\label{e-1}
\begin{split}
\Delta_{\ti{g}}(|\partial\varphi|_{g}^{2}) ={} &\sum_{k}\tilde{g}^{i\overline{i}}(|\varphi_{ik}|^{2}+| \varphi_{i\ov{k}}|^{2})
+2\beta\textrm{Re}\left(\sum_{k}\varphi_{k}(\vp-v)_{\overline{k}}\right)\\
{} & +\tilde{g}^{i\overline{i}}\partial_{i}\partial_{\ov{i}}(g^{k\overline{\ell}})\varphi_{k}\varphi_{\overline{\ell}}\\
\geq {} & \sum_{k}\tilde{g}^{i\overline{i}}(|\varphi_{ik}|^{2}+|\varphi_{i\ov{k}}|^{2})- C\sum_{i}\tilde{g}^{i\overline{i}}+2\beta|\de\vp|^2_g
-2\beta\textrm{Re}\left(\sum_{k}\varphi_{k}v_{\overline{k}}\right)\\
\geq {} & \sum_{k}\tilde{g}^{i\overline{i}}(|\varphi_{ik}|^{2}+|\varphi_{i\ov{k}}|^{2})- C\sum_{i}\tilde{g}^{i\overline{i}}+\beta|\de\vp|^2_g
-\beta|\de v|^2_g,
\end{split}
\end{equation}
where to derive the first line we have applied $\partial_{\ov{k}}$ to  \eqref{logeq}.
But then $$\beta h' |\de v|^2_g\leq \frac{\beta \lambda}{2}|\de v|^2_g\leq \frac{\beta \sup_X |\de v|^2_g}{2+4\sup_X|\de v|^2_g}\leq\frac{\beta}{4},$$
and so combining this with \eqref{e3}, \eqref{e2} and \eqref{e-1}, we see that \eqref{e4} holds.

Now the rest of the proof proceeds exactly as in \cite{CTW2}, since \eqref{e4} is the same as \cite[(2.6)]{CTW2}, and the specific form of the PDE \eqref{ma} is not used anymore in \cite{CTW2} after that point. At a couple of places we used that $h''=2(h')^2$, but in fact the inequality $h''\geq 2(h')^2$ is enough, and this holds in our case. The constant $A$ is chosen at the end of the argument of \cite[Proof of Theorem 1.2]{CTW2}, and it equals $A=C+3,$ where $C$ is the uniform constant in \eqref{e4}. This completes the proof of Theorem \ref{main}.

\section{Rooftop envelopes}\label{sectenv}
In this section we consider a generalization of Theorem \ref{main}, as follows.

Suppose we are now given $C^{1,1}$ functions $v_j,j=1,\dots,k$ on a compact K\"ahler manifold $(X,\omega)$, and we consider the ``rooftop envelope''
$$P(v_1,\dots,v_k)(x)=\sup\{u(x)\ |\ u\in PSH(X,\omega), u\leq \min_{j=1,\dots,k} v_j\}.$$
When $k=1$ this is essentially the same as the envelope we considered in Theorem \ref{main}, but with a weaker regularity assumption. Darvas-Rubinstein proved in \cite{DR} that $P(v_1,\dots,v_k)$ has bounded Laplacian on $X$, in particular it is in $C^{1,\alpha}(X)$ for all $0<\alpha<1$, and that if $[\omega]\in H^2(X,\mathbb{Q})$ then $P(v_1,\dots,v_k)$ is in $C^{1,1}(X)$. This last point used the regularity results of Berman and Demailly \cite{Be3,BD}, and another proof was also given by Berman \cite{Be}. Using Theorem \ref{main}, we can prove the $C^{1,1}$ regularity of $P(v_1,\dots,v_k)$ in general K\"ahler classes:

\begin{theorem}\label{main2}
The rooftop envelope $P(v_1,\dots,v_k)$ is in $C^{1,1}(X)$.
\end{theorem}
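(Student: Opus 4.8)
The plan is to derive Theorem~\ref{main2} from Theorem~\ref{main}, following \cite[Proof of Theorem 2.5]{DR}. First I would record that Theorem~\ref{main} extends to the case when the obstacle is only $C^{1,1}$: given $v\in C^{1,1}(X)$, mollify it in coordinate charts (using a partition of unity) to obtain $v_\ve\in C^\infty(X)$ with $\|v_\ve\|_{C^{1,1}}$ bounded uniformly in $\ve$ and $v_\ve\to v$ uniformly. Since $P(\cdot)$ is monotone and $P(f+c)=P(f)+c$ for constants $c$, from $|v_\ve-v|\le\delta_\ve\to0$ one gets $\|P(v_\ve)-P(v)\|_{C^0}\le\delta_\ve$, while Theorem~\ref{main} (with constant depending only on $\|v_\ve\|_{C^{1,1}}$, hence uniform in $\ve$) gives a uniform Hessian bound on $P(v_\ve)$; letting $\ve\to0$ we conclude $P(v)\in C^{1,1}(X)$. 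This already settles $k=1$, but $\min_{j}v_j$ is only Lipschitz, so for $k\ge2$ one must exploit that it is uniformly semiconcave.

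Next, following \cite{DR}, I would replace $\min_{j}v_j$ by the regularized minimum $m_\beta:=-\tfrac1\beta\log\sum_{j=1}^k e^{-\beta v_j}$, $\beta>0$. It satisfies $\min_j v_j-\tfrac{\log k}{\beta}\le m_\beta\le\min_j v_j$, is of class $C^{1,1}$ (and $C^\infty$ if the $v_j$ are), and — writing $a_j=e^{-\beta v_j}/\sum_\ell e^{-\beta v_\ell}$, so that $\nabla m_\beta=\sum_j a_j\nabla v_j$ and $\nabla^2 m_\beta=\sum_j a_j\nabla^2 v_j-\beta\,\mathrm{Cov}_a(\nabla v)$ with $\mathrm{Cov}_a(\nabla v)\ge0$ — it has $\sup_X|\de m_\beta|_g$ and the \emph{upper} Hessian bound $\sup_X\lambda_{\max}(\nabla^2 m_\beta)\le\max_j\sup_X\lambda_{\max}(\nabla^2 v_j)$ controlled uniformly in $\beta$; only the lower Hessian bound degenerates as $\beta\to\infty$. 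The key observation is that the Berman family attached to $m_\beta$ with parameter equal to $\beta$ itself is
\begin{equation*}
(\omega+\ddbar\vp_\beta)^n=e^{\beta(\vp_\beta-m_\beta)}\omega^n=\Big(\sum_{j=1}^k e^{\beta(\vp_\beta-v_j)}\Big)\omega^n,
\end{equation*}
so that the right-hand side involves only the fixed smooth data $v_j$, not $m_\beta$.

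For this family I would first check, by the maximum principle, that $\vp_\beta\to P(v_1,\dots,v_k)$ uniformly: the bound $\vp_\beta\le m_\beta+C/\beta\le\min_j v_j+C/\beta$ follows by evaluating at the maximum of $\vp_\beta-m_\beta$, where $\omega+\ddbar\vp_\beta\le\omega+\ddbar m_\beta\le(1+C)\omega$, and the lower bound from the comparison principle against $\omega$-psh competitors $u<\min_j v_j$. Then I would run the computation of Section~\ref{sectmain} verbatim on this family: the obstacle enters the analogue of \eqref{e4} only through $\sup_X|\de m_\beta|^2_g$ (in the definition of $\lambda$ and in \eqref{e-1}) and through the term $-\beta\,V_1V_1(m_\beta)$ produced by applying $V_1V_1$ to $\log\det\ti g=\log\det g+\beta(\vp_\beta-m_\beta)$ exactly as in \eqref{e0}, where only the \emph{upper} bound $V_1V_1(m_\beta)\le C$ is needed; both quantities are uniform in $\beta$. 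One must likewise verify that Berman's preliminary estimates \eqref{berm} hold uniformly for this family ($|\vp_\beta|$ is bounded in terms of $\sup_X|v_j|$ only, and the Laplacian bound again uses only the upper Hessian bound of $m_\beta$). Crucially one keeps working with $\vp_\beta$ throughout and never with $u_\beta=\vp_\beta-m_\beta$, whose Hessian genuinely fails to be uniformly bounded. Letting $\beta\to\infty$ (and, if the $v_j$ are only $C^{1,1}$, first replacing them by mollifications and letting $\ve\to0$) then yields $P(v_1,\dots,v_k)\in C^{1,1}(X)$.

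The main obstacle is precisely this bookkeeping in the third step: although $\|m_\beta\|_{C^{1,1}}$ blows up like $\beta$, one must check that every place in the estimates \eqref{berm} and in the argument of Section~\ref{sectmain} where the obstacle appears, its contribution is controlled by $\sup_X|\de m_\beta|_g$ and $\sup_X\lambda_{\max}(\nabla^2 m_\beta)$ alone, both of which stay bounded. This is the content of \cite[Proof of Theorem 2.5]{DR}, now fed with Theorem~\ref{main}, in the form of the proof in Section~\ref{sectmain}, in place of the rational-class regularity result used there.
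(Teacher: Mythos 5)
Your $k=1$ step is essentially the paper's own proof: the paper also mollifies the $C^{1,1}$ obstacle, uses the fact that the constant in \eqref{goal} depends only on the $C^{1,1}$ norm of the obstacle, and passes to the limit via the uniform convergence $P(v_j)\to P(v)$. Where you genuinely diverge is the case $k\ge 2$: the paper simply quotes the reduction to $k=1$ from \cite[Proof of Theorem 2.5]{DR} (an argument based on Kiselman's minimum principle and an envelope on a product, not on any smoothing of the minimum), whereas you prove the reduction directly by coupling the log-sum-exp regularization $m_\beta$ of $\min_j v_j$ to Berman's parameter, so that the approximating equation becomes $(\omega+\ddbar\vp_\beta)^n=\big(\sum_j e^{\beta(\vp_\beta-v_j)}\big)\omega^n$, and by observing that every place the obstacle enters --- Berman's $C^0$ and Laplacian estimates \eqref{berm}, the constant $\lambda$ in \eqref{defh}, the term $-\beta V_1V_1(v)$ in \eqref{e0}, and the term $-\beta|\de v|^2_g$ in \eqref{e-1} --- only $\sup_X|m_\beta|$, $\sup_X|\de m_\beta|_g$ and an upper bound on $\nabla^2 m_\beta$ (equivalently on $\ddbar m_\beta$) are used, all of which are uniform in $\beta$ because $\de m_\beta$ is a convex combination of the $\de v_j$ and the Hessian correction is negative semidefinite. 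This route does work: the upper bound $\vp_\beta\le m_\beta+C/\beta$ follows at the maximum of $\vp_\beta-m_\beta$ as you say, the lower bound follows from the barrier $(1-\delta)u$ (shifted by $O(\delta)+O(\beta^{-1}\log(1/\delta))$, with $\delta=1/\beta$) for any competitor $u$, with constants depending only on $C^0$ data, and the Chern--Lu/Aubin--Yau Laplacian estimate indeed only sees $\Delta_\omega m_\beta$ from above together with $\vp_\beta-m_\beta\le C/\beta$. Your approach buys self-containedness (no Kiselman/product-space machinery from \cite{DR}), at the cost of having to re-derive the preliminary estimates \eqref{berm} uniformly for a $\beta$-dependent obstacle, which the paper avoids by treating the \cite{DR} reduction as a black box. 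One small correction: your closing attribution is inaccurate --- the uniformity bookkeeping you describe is not ``the content of \cite[Proof of Theorem 2.5]{DR}''; their reduction is a different (Legendre transform/Kiselman) argument, and your coupled-parameter argument replaces it rather than reproduces it.
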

\begin{proof}
The argument in \cite[Proof of Theorem 2.5]{DR} reduces this result to proving the case when $k=1$. So we have a function $v\in C^{1,1}(X)$, and consider the envelope
$$P(v)(x)=\sup\{u(x)\ |\ u\in PSH(X,\omega), u\leq v\},$$
and the goal is to show that $P(v)$ is also in $C^{1,1}(X)$.

By using convolution in local charts and gluing them with a partition of unity (see e.g. the appendix in \cite{Ma}) can choose a sequence $v_j$ of smooth functions which converge to $v$ in $C^{1,\alpha}(X)$ for some fixed $0<\alpha<1$, and such that $\|v_j\|_{C^{1,1}(X,g)}\leq C$ for all $j$. For each $j$ and $\beta\geq 0$ solve
\begin{equation}\label{ma3}
(\omega+\ddbar \vp)^n=e^{\beta (\vp-v_j)}\omega^n,
\end{equation}
where $\vp=\vp_{j,\beta}$ and $\omega+\ddbar\vp>0$. As mentioned earlier, Berman \cite{Be2} proved that
\begin{equation}\label{berm2}
|\vp|\leq C, \quad |\Delta_g \vp|\leq C, \quad |\vp-P(v_j)|\leq C\frac{\log\beta}{\beta},
\end{equation}
for a uniform constant $C$ independent of $j,\beta$, from which it follows that for any $j$ fixed $\vp$ converges to $P(v_j)$ in $C^{1,\alpha}(X)$ for any $0<\alpha<1$, as $\beta\to\infty$.
From Theorem \ref{main} and its proof, we also have that
$$|\nabla^2\vp|_g\leq C,$$
independent of $j,\beta$. Therefore $\|P(v_j)\|_{C^{1,1}(X,g)}\leq C$ for all $j$. On the other hand we have that $P(v_j)\to P(v)$ uniformly as $j\to\infty$, which follows easily from the definition, and so we conclude that $P(v)\in C^{1,1}(X)$ as well.
\end{proof}


\begin{thebibliography}{99}
\bibitem{AT} Ambrosio, L., Tilli, P. {\em Topics on analysis in metric spaces}, Oxford Lecture Series in Mathematics and its Applications, 25. Oxford University Press, Oxford, 2004.
\bibitem{A}  Aubin, T. {\em \'Equations du type Monge-Amp\`ere sur les vari\'et\'es k\"ahl\'eriennes compactes}, C. R. Acad. Sci. Paris S\'er. A-B {\bf 283} (1976), no. 3, Aiii, A119--A121.
\bibitem{Be3} Berman, R.J. {\em Bergman kernels and equilibrium measures for line bundles over projective manifolds}, Amer. J. Math. {\bf 131} (2009), no. 5, 1485--1524.
\bibitem{Be2} Berman, R.J. {\em From Monge-Amp\`ere equations to envelopes and geodesic rays in the zero temperature limit}, preprint, arXiv:1307.3008
\bibitem{Be} Berman, R.J. {\em On the optimal regularity of weak geodesics in the space of metrics on a polarized manifold}, preprint, arXiv:1405.6482
\bibitem{BB} Berman, R.J., Boucksom, S. {\em Growth of balls of holomorphic sections and energy at equilibrium}, Invent. Math. {\bf 181} (2010), no. 2, 337--394.
\bibitem{BBGZ} Berman, R.J., Boucksom, S., Guedj, V., Zeriahi, A. {\em A variational approach to complex Monge-Amp\`ere equations}, Publ. Math. Inst. Hautes \'Etudes Sci. {\bf 117} (2013), 179--245.
\bibitem{BD} Berman, R.J., Demailly, J.-P. {\em Regularity of plurisubharmonic upper envelopes in big cohomology classes}, in {\em Perspectives in analysis, geometry, and topology}, 39--66, Progr. Math., 296, Birkh\"auser/Springer, New York, 2012.
\bibitem{CTW} Chu, J., Tosatti, V., Weinkove, B. {\em The Monge-Amp\`ere equation for non-integrable almost complex structures}, preprint, arXiv:1603.00706
\bibitem{CTW2} Chu, J., Tosatti, V., Weinkove, B. {\em On the $C^{1,1}$ regularity of geodesics in the space of K\"ahler metrics}, preprint, arXiv:1611.02390
\bibitem{CZ} Chu, J., Zhou, B. {\em Optimal regularity of plurisubharmonic envelopes on compact Hermitian manifolds}, preprint, arXiv:1702.05230
\bibitem{DR} Darvas, T., Rubinstein, Y.A. {\em Kiselman's principle, the Dirichlet problem for the Monge-Amp\'ere equation, and rooftop obstacle problems}, J. Math. Soc. Japan {\bf 68} (2016), no. 2, 773--796.
\bibitem{Ma} Matveev, V.S. {\em Can we make a Finsler metric complete by a trivial projective change?}, in {\em Recent trends in Lorentzian geometry}, 231--242, Springer Proc. Math. Stat., 26, Springer, New York, 2013.
\bibitem{WN} Witt Nystr\"om, D. {\em Duality between the pseudoeffective and the movable cone on a projective manifold. With an appendix by S\'ebastien Boucksom}, preprint, arXiv:1602.03778
\bibitem{Ya} Yau, S.-T. {\em On the Ricci curvature of a compact K\"ahler manifold and the complex Monge-Amp\`ere equation, I}, Comm. Pure Appl. Math. {\bf 31} (1978), no.3, 339--411.
\end{thebibliography}
\end{document}